\newtheorem{theorem}{Theorem}
\newtheorem{property}{Property}
\begin{document}
\tikzset{->-/.style={decoration={
  markings,
  mark=at position .5 with {\arrow{>}}},postaction={decorate}}}

\title{The Oriented Chromatic Number \\ of the Hexagonal Grid is 6}

\author{Antoni Lozano\thanks{CS Dept., Universitat Polit\`ecnica de Catalunya, 08034-Barcelona, Catalonia, {\tt antoni.lozano@upc.edu}}}

\date{}

\maketitle


\begin{abstract}\noindent
The oriented chromatic number of a directed graph $G$ is the minimum order of an oriented graph to which $G$ has a homomorphism. The oriented chromatic number $\chi_o({\cal F})$ of a graph family ${\cal F}$ is the maximum oriented chromatic number over any orientation of any graph in ${\cal F}$. For the family of hexagonal grids ${\cal H}_2$, Bielak (2006) proved that $5 \le \chi_o({\cal H}_2) \le 6$. Here we close the gap by showing that $\chi_o({\cal H}_2) \ge 6$.

\smallskip\smallskip
{\small\noindent
{\bf Keywords}: graph, oriented coloring, hexagonal grid.\\
\noindent
{\bf 2020 Mathematics Subject Classification}: 05C15, 05C85. }
\end{abstract}

\section{Introduction}\label{sec:int}

An {\em oriented graph} is a digraph $\vv{G}$ obtained from a simple graph $G$ by assigning to each edge one of its two possible directions. Digraph $\vv{G}$ is then called {\em an orientation of} $G$. A {\em tournament} ({\em $k$-tournament}) is an orientation of the complete graph ($K_k$).

All graphs considered here are either finite simple graphs or their orientations. As usual, $V(G)$ denotes the set of vertices of an unoriented graph $G$, $E(G)$ denotes its set of edges, and $A(\vv{G})$ denotes the arc set of the orientation $\vv{G}$ of $G$. For any arc $(u,v)$, we say that $u$ dominates $v$. The {\em outdegree} of $u$ in $\vv{G}$, denoted by ${\rm deg}_{\vv{G}}^+(u)$ is the number of vertices dominated by $u$. The {\em score set} of a tournament $T$ is the set $\{ {\rm deg}_T^+(u) \}_{u \in V(T)}$.

\subsection{Oriented chromatic number}

An {\em oriented $k$-coloring} of an oriented graph is a homomorphism to an orientation of $K_k$. Therefore, an oriented $k$-coloring of an oriented graph $\vv{G}$ is a function $\phi: V(\vv{G}) \rightarrow V(\vv{K_k}$) such that if $(u,v) \in A(\vv{G})$, then $(\phi(u),\phi(v)) \in \vv{K}_k$, where $\vv{K}_k$ is an orientation of $K_k$. For any vertex $u \in V(\vv{G})$, we say that $\phi(u)$ is its {\em color} in the oriented $k$-coloring $\phi$ of $\vv{G}$. A common characterization of an oriented coloring $\phi$ of $\vv{G}$ is as a (proper) $k$-coloring for which colors can only be assigned to vertices in a given direction, that is, if $(u,v), (s,t) \in A(\vv{G})$ and $\phi(u) = \phi(t)$, then $\phi(v) \ne \phi(s)$.

The {\em oriented chromatic number $\chi_o(\vv{G})$ of an oriented graph $\vv{G}$} is the minimum number $k$ for which $\vv{G}$ has an oriented $k$-coloring.
The {\em oriented chromatic number $\chi_o(G)$ of an unoriented graph $G$} is defined as the maximum $\chi_o(\vv{G})$ over all orientations $\vv{G}$ of $G$. If ${\cal F}$ is a family of graphs, the oriented chromatic number of ${\cal F}$ is defined as $\chi_o({\cal F}) = \max \{ \chi_o(G) \mid G \in {\cal F} \}$. See \cite{S16} for a survey of the main results regarding the oriented chromatic number and \cite{BDLNS} for the related concept of pushable chromatic number applied to grids.

Some bounds on the oriented chromatic number are known for several graph families like planar graphs \cite{RS}, outerplanar graphs \cite{S97}, graphs with bounded degree \cite{S97,KSZ,W07}, Halin graphs \cite{DS}, Cartesian products \cite{S12,W05}, hypergraphs \cite{W07}, or square grids \cite{FRR,DN,B}.

\subsection{Grids}

The {\em square grid} $S(m,n)$ is defined as the Cartesian product $P_m \square P_n$. The vertices of $S(m,n)$ will be represented by $v_{i,j}$, for $1 \le i \le m$ and $1 \le j \le n$, with its edges being all $\{ v_{i,j}, v_{i,j+1} \}$ and $\{ v_{i,j}, v_{i+1,j} \}$, for $v_{i,j}, v_{i,j+1}, v_{i+1,j} \in V(S(m,n))$. The set of all subgraphs of $S(m,n)$, for all $m$ and $n$, is referred to as {\em the square grid} and is denoted by ${\cal S}_2$. Finding the exact value for the oriented chromatic number of the square grid seems to be a surprisingly hard problem. Fertin, Raspaud, and Roychowdhury \cite{FRR} proved that $7 \le \chi_o({\cal S}_2) \le 11$, the lower bound being subsequently improved to 8 by Dybizba\'nski and Nenca \cite{DN}.

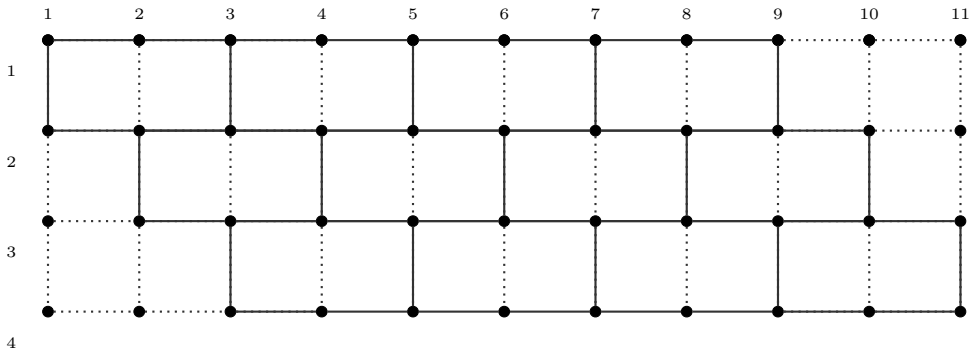
\begin{figure}[H]
\centerline{
\begin{tikzpicture}
[scale=0.55,edge_style/.style={black!80,-,thick}]
\tikzstyle{vertex}=[scale=0.5,circle,draw,minimum size=8pt,inner sep=2pt,fill=black]
\foreach \x in {1,2,3,4}
\node[vertex,label={[left=8pt,below=0.5pt]{\tiny \x}}] (\x-1) at (0,10-2*\x) {};
\foreach \y in {1,2,3,4,5,6,7,8,9,10,11}
\node[vertex,label={[above=2pt]{\tiny \y}}] (1-\y) at (2*\y-2,8) {};
\foreach \x in {1,2,3,4}
\foreach \y in {2,3,4,5,6,7,8,9,10,11}
\node[vertex] (\x-\y) at (2*\y-2,10-2*\x) {};
\foreach \y in {1,3,5,7,9}
\path[edge_style] (1-\y) edge (2-\y);
\foreach \y in {2,4,6,8,10}
\path[edge_style] (2-\y) edge (3-\y);
\foreach \y in {3,5,7,9,11}
\path[edge_style] (3-\y) edge (4-\y);
\foreach \x in {1,2} {
\path[edge_style] (\x-1) edge (\x-2);
\path[edge_style] (\x-2) edge (\x-3);
\path[edge_style] (\x-3) edge (\x-4);
\path[edge_style] (\x-4) edge (\x-5);
\path[edge_style] (\x-5) edge (\x-6);
\path[edge_style] (\x-6) edge (\x-7);
\path[edge_style] (\x-7) edge (\x-8);
\path[edge_style] (\x-8) edge (\x-9);
}
\foreach \x in {2,3} {
\path[edge_style] (\x-2) edge (\x-3);
\path[edge_style] (\x-3) edge (\x-4);
\path[edge_style] (\x-4) edge (\x-5);
\path[edge_style] (\x-5) edge (\x-6);
\path[edge_style] (\x-6) edge (\x-7);
\path[edge_style] (\x-7) edge (\x-8);
\path[edge_style] (\x-8) edge (\x-9);
\path[edge_style] (\x-9) edge (\x-10);
}
\foreach \x in {3,4} {
\path[edge_style] (\x-3) edge (\x-4);
\path[edge_style] (\x-4) edge (\x-5);
\path[edge_style] (\x-5) edge (\x-6);
\path[edge_style] (\x-6) edge (\x-7);
\path[edge_style] (\x-7) edge (\x-8);
\path[edge_style] (\x-8) edge (\x-9);
\path[edge_style] (\x-9) edge (\x-10);
\path[edge_style] (\x-10) edge (\x-11);
}
\foreach \y in {1,2,3,4,5,6,7,8,9,10,11}{
\path[edge_style] (1-\y) edge[dotted] (2-\y);
\path[edge_style] (2-\y) edge[dotted] (3-\y);
\path[edge_style] (3-\y) edge[dotted] (4-\y);}
\foreach \x in {1,2,3,4} {
\path[edge_style] (\x-1) edge[dotted] (\x-2);
\path[edge_style] (\x-2) edge[dotted] (\x-3);
\path[edge_style] (\x-3) edge[dotted] (\x-4);
\path[edge_style] (\x-9) edge[dotted] (\x-10);
\path[edge_style] (\x-10) edge[dotted] (\x-11);}
\end{tikzpicture}}
\caption{The square grid $S(4,11)$ is represented by solid and dotted edges. Solid edges alone represent the hexagonal grid $H_{3,4}$.}
\label{fig:gmn}
\end{figure}

\begin{figure}[H]
\centerline{
\begin{tikzpicture}[scale=0.5,rotate=-30,edge_style/.style={thick,black!80}]
\tikzstyle{vertex}=[scale=0.7,circle,draw,minimum size=8pt,inner sep=3pt]
\node[vertex] (3-4) at ({1.732*(3+4/2)},{3*4/2}) {$v_{1,1}$};
\node[vertex] (3-5) at ({1.732*(3+5/2)},{3*5/2}) {$v_{1,2}$};
\node[vertex] (4-5) at ({1.732*(4+5/2)},{3*5/2}) {$v_{1,3}$};
\node[vertex] (4-6) at ({1.732*(4+6/2)},{3*6/2}) {$v_{1,4}$};
\node[vertex] (5-6) at ({1.732*(5+6/2)},{3*6/2}) {$v_{1,5}$};
\node[vertex] (5-7) at ({1.732*(5+7/2)},{3*7/2}) {$v_{1,6}$};
\node[vertex] (6-7) at ({1.732*(6+7/2)},{3*7/2}) {$v_{1,7}$};
\node[vertex] (6-8) at ({1.732*(6+8/2)},{3*8/2}) {$v_{1,8}$};
\node[vertex] (7-8) at ({1.732*(7+8/2)},{3*8/2}) {$v_{1,9}$};
\node[vertex] (4-3) at ({1.732*(4+3/2)},{3*3/2}) {$v_{2,1}$};
\node[vertex] (5-3) at ({1.732*(5+3/2)},{3*3/2}) {$v_{2,2}$};
\node[vertex] (5-4) at ({1.732*(5+4/2)},{3*4/2}) {$v_{2,3}$};
\node[vertex] (6-4) at ({1.732*(6+4/2)},{3*4/2}) {$v_{2,4}$};
\node[vertex] (6-5) at ({1.732*(6+5/2)},{3*5/2}) {$v_{2,5}$};
\node[vertex] (7-5) at ({1.732*(7+5/2)},{3*5/2}) {$v_{2,6}$};
\node[vertex] (7-6) at ({1.732*(7+6/2)},{3*6/2}) {$v_{2,7}$};
\node[vertex] (8-6) at ({1.732*(8+6/2)},{3*6/2}) {$v_{2,8}$};
\node[vertex] (8-7) at ({1.732*(8+7/2)},{3*7/2}) {$v_{2,9}$};
\node[vertex,inner sep=1pt] (9-7) at ({1.732*(9+7/2)},{3*7/2}) {$v_{2,10}$};
\node[vertex] (6-2) at ({1.732*(6+2/2)},{3*2/2}) {$v_{3,2}$};
\node[vertex] (7-2) at ({1.732*(7+2/2)},{3*2/2}) {$v_{3,3}$};
\node[vertex] (7-3) at ({1.732*(7+3/2)},{3*3/2}) {$v_{3,4}$};
\node[vertex] (8-3) at ({1.732*(8+3/2)},{3*3/2}) {$v_{3,5}$};
\node[vertex] (8-4) at ({1.732*(8+4/2)},{3*4/2}) {$v_{3,6}$};
\node[vertex] (9-4) at ({1.732*(9+4/2)},{3*4/2}) {$v_{3,7}$};
\node[vertex] (9-5) at ({1.732*(9+5/2)},{3*5/2}) {$v_{3,8}$};
\node[vertex] (10-5) at ({1.732*(10+5/2)},{3*5/2}) {$v_{3,9}$};
\node[vertex,inner sep=1pt] (10-6) at ({1.732*(10+6/2)},{3*6/2}) {$v_{3,10}$};
\node[vertex,inner sep=1pt] (11-6) at ({1.732*(11+6/2)},{3*6/2}) {$v_{3,11}$};
\node[vertex] (8-1) at ({1.732*(8+1/2)},{3*1/2}) {$v_{4,3}$};
\node[vertex] (9-1) at ({1.732*(9+1/2)},{3*1/2}) {$v_{4,4}$};
\node[vertex] (9-2) at ({1.732*(9+2/2)},{3*2/2}) {$v_{4,5}$};
\node[vertex] (10-2) at ({1.732*(10+2/2)},{3*2/2}) {$v_{4,6}$};
\node[vertex] (10-3) at ({1.732*(10+3/2)},{3*3/2}) {$v_{4,7}$};
\node[vertex] (11-3) at ({1.732*(11+3/2)},{3*3/2}) {$v_{4,8}$};
\node[vertex] (11-4) at ({1.732*(11+4/2)},{3*4/2}) {$v_{4,9}$};
\node[vertex,inner sep=1pt] (12-4) at ({1.732*(12+4/2)},{3*4/2}) {$v_{4,10}$};
\node[vertex,inner sep=1pt] (12-5) at ({1.732*(12+5/2)},{3*5/2}) {$v_{4,11}$};
\foreach \s/\t in {4-3/3-4,3-4/3-5,4-5/3-5,5-4/4-5,4-6,4-5,6-5/5-6,5-6/4-6,6-4/6-5,4-5/4-6,5-3/4-3,5-3/5-4,5-4/6-4,5-6/5-7,5-7/6-7,6-7/7-6,7-6/7-5,6-5/7-5,6-7/6-8,6-8/7-8,7-8/8-7,8-7/8-6,8-6/7-6,6-2/5-3,6-2/7-2,7-2/7-3,7-3/6-4,7-3/8-3,8-3/8-4,8-4/7-5,8-4/9-4,9-4/9-5,9-5/8-6,9-5/10-5,10-5/10-6,10-6/9-7,9-7/8-7,8-1/7-2,8-1/9-1,9-1/9-2,9-2/8-3,9-2/10-2,10-2/10-3,10-3/9-4,10-3/11-3,11-3/11-4,11-4/10-5,11-4/12-4,12-4/12-5,12-5/11-6,11-6/10-6}
\path[edge_style] (\s) edge [-] (\t);
\end{tikzpicture}}
\caption{Hexagonal grid $H_{3,4}$ with vertex names.
}
\label{fig:hmn}
\end{figure}
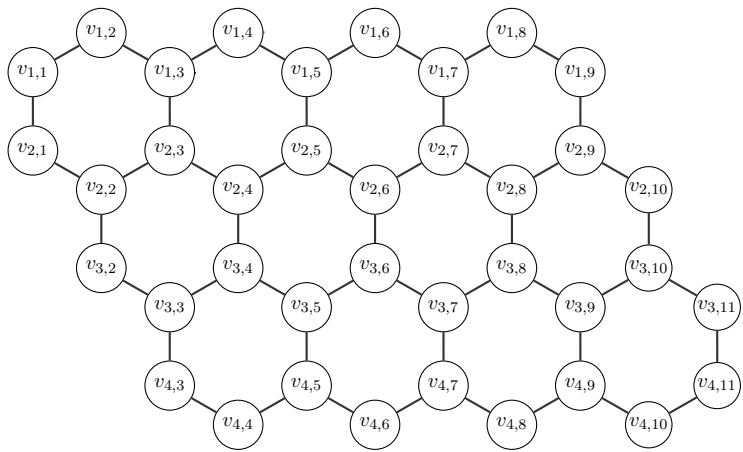

The {\em hexagonal grid} $H_{m,n}$ is defined in \cite{B} (only for $m \le 2$) as $m$ rows of $n$ hexagons (see Figure~\ref{fig:hmn}) and corresponds to the  subgraph of $S(m+1,2n+m)$ just containing the vertices $v_{i,j}$ such satisfy $i-1 \le j \le i+2n$, all edges of the kind $\{ v_{i,j}, v_{i,j+1} \}$, and edges of the kind $\{v_{i,j},v_{i+1,j}\}$ for even $i+j$ (see Figure~\ref{fig:gmn}). The set of all subgraphs of $H_{m,n}$, for all $m$ and $n$, is referred to as {\em the hexagonal grid} and is denoted by ${\cal H}_2$.

Bielak \cite{B} proved that $\chi_o(H_{1,n}) = \chi_o(H_{2,n}) = 5$ for any $n > 3$  (the case of $\chi_o(H_{2,n})$ is a consequence of her proof for fat hexagonal trees). For the general case of the hexagonal grid, she also proved that $5 \le \chi_o({\cal H}_2) \le 6$. In Section~\ref{section:upper}, we give an alternative proof of Bielak's upper bound, while in Section~\ref{section:lower} we show that 6 is actually a lower bound for $\chi_o({\cal H}_2)$ by means of two counterexamples checked by computer.

\section{Upper bound}\label{section:upper}

Bielak \cite{B} showed that the hexagonal grid has oriented chromatic number at most 6. For completeness, we include a proof based on the ideas used by Fertin {\em et al.} \cite{FRR} to show that the square grid has oriented chromatic number at most 11.

\begin{theorem}
$\chi_o({\cal H}_2) \le 6$.
\end{theorem}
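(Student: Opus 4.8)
The plan is to fix a single tournament $T$ on $6$ vertices and to show that \emph{every} orientation $\vv{H}$ of $H_{m,n}$ admits a homomorphism $\phi:\vv{H}\to T$. By the definition of oriented colouring given above, a homomorphism to a $6$-tournament \emph{is} an oriented $6$-colouring, so nothing extra is needed to check the ``no two classes joined both ways'' condition: between any two vertices of $T$ there is exactly one arc, and $\phi$ forces every arc of $\vv{H}$ joining two colour classes to follow that single direction. Moreover, since a restriction of $\phi$ is again a homomorphism, it suffices to treat the full grids $H_{m,n}$ in order to cover every subgraph, hence all of ${\cal H}_2$.

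I would construct $\phi$ by a left-to-right sweep in the square-grid embedding of Figure~\ref{fig:gmn}: process the vertices $v_{i,j}$ by increasing column $j$, and top-to-bottom within a column. With this order each $v_{i,j}$ has at most two already-coloured neighbours, its horizontal predecessor $v_{i,j-1}$ and, when $i+j$ is odd (so the vertical edge $\{v_{i-1,j},v_{i,j}\}$ is present), its upper neighbour $v_{i-1,j}$; the neighbours $v_{i,j+1}$ and $v_{i+1,j}$ (the latter present when $i+j$ is even) are coloured later. So at each step I only choose a colour for $v_{i,j}$ compatible, in the arc directions dictated by $\vv{H}$, with at most two fixed colours.

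The engine is a set of saturation properties of $T$. The essential subtlety is that \emph{no} $6$-vertex tournament is fully $2$-saturated (that already requires $7$ vertices, as in the Paley tournament $QR_7$), so a blind greedy choice against two back-neighbours with arbitrary colours and arbitrary arc directions cannot succeed for all patterns. What saves six colours is that the hexagonal grid is bipartite, has girth $6$ and maximum degree $3$, and the parity rule $i+j$ governs exactly which vertical edges appear; this, together with the fact that the whole orientation is known in advance (so the colouring may look ahead rather than react to an adversary), drastically restricts the constraint patterns that must actually be satisfied. Concretely, I would phrase the induction as maintaining the invariant that the partial colouring of the processed region extends, and verify the extension step by a finite case analysis: enumerate the local configurations of (colours, arc directions) at the two back-neighbours that the sweep can reach, and exhibit for the chosen explicit $T$ a valid colour in each such case.

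The hard part will be exactly this matching between the limited saturation available on six colours and the configurations the geometry forces, and in particular ruling out the fatal \emph{same-colour, opposite-direction} situation at the confluent vertices $v_{i,j}$ with $i+j$ odd, where one back-neighbour would have to dominate $v_{i,j}$ and be dominated by it under a common colour $a$ — impossible in a tournament. I would dispose of it by strengthening the colouring rule so that the two back-neighbours of every such vertex receive distinct colours (treating the already-coloured $v_{i,j-1}$ as an additional forbidden colour when colouring $v_{i-1,j}$), and then checking that the distinct-colour patterns that can still arise are precisely the ones $T$ can realise. Equivalently one may run a transfer/automaton argument over the admissible colourings of a column and show non-emptiness is preserved. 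In either form the verification that the explicit $T$ is compatible with all reachable patterns is a finite but delicate check, and organising it so that the auxiliary ``distinct-colour'' constraints imposed on earlier vertices remain simultaneously satisfiable is where the real work concentrates.
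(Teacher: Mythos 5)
Your plan correctly isolates the obstruction---no $6$-vertex tournament is ``$2$-saturated,'' so a vertex-by-vertex greedy sweep cannot work blindly---but it does not overcome it: the decisive step is deferred to a ``finite but delicate check'' that is never carried out, and the one concrete repair you do propose is provably insufficient. Forcing the two back-neighbours of each confluent vertex $v_{i,j}$ ($i+j$ odd) to receive \emph{distinct} colours does not help, because no tournament $T$ on $6$ vertices realizes all four direction patterns even against distinct colour pairs. Indeed, writing $N^+(x)$, $N^-(x)$ for out- and in-neighbourhoods in $T$: for distinct $a,b$ the four sets $N^{\varepsilon_1}(a)\cap N^{\varepsilon_2}(b)$ with $\varepsilon_1,\varepsilon_2\in\{+,-\}$ partition the remaining four vertices, so if all four were nonempty for every pair, each would be a singleton for every pair; but then choosing $b\in N^-(a)$ forces $|N^+(a)|=2$, while choosing $b'\in N^+(a)$ forces $|N^+(a)\setminus\{b'\}|=2$, i.e.\ $|N^+(a)|=3$, a contradiction (and the degenerate cases $N^+(a)=\emptyset$ or $N^-(a)=\emptyset$ fail immediately). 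Hence, for \emph{every} choice of $T$, your sweep will reach confluent vertices whose (distinct-colour, direction) pattern is unrealizable; avoiding them requires the look-ahead/transfer-automaton machinery you only gesture at, and you never exhibit $T$, never specify the column invariant, and never verify its non-emptiness. That verification \emph{is} the theorem; as it stands you have a plan, not a proof.

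The missing idea---and the way the paper escapes the saturation bottleneck---is to change the unit of progress from one vertex to two. Working row by row in the hexagonal drawing (Figure~\ref{fig:hmn}), the new vertices of row $k$ arrive in adjacent pairs $v_{k,j},v_{k,j+1}$ lying on a path of unoriented length $3$ whose two endpoints are already coloured (the previously coloured vertex of row $k$ and an attachment vertex of row $k-1$). So what is required of $T$ is not $2$-saturation but a weaker path property: for any ordered pair of (possibly equal) colours and any orientation of a $3$-edge path between them, there exist compatible colours for the two interior vertices. Unlike $2$-saturation, this \emph{is} achievable on six vertices: the paper exhibits the explicit tournament $A_6$ and verifies the property (Property~\ref{prop:a6}) by computer, after which the colouring algorithm is immediate (the first row only needs all in- and out-degrees of $A_6$ to be at least $2$). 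This is exactly the ``matching between limited saturation and forced configurations'' you anticipated would be the hard part, but it is resolved by exploiting that the hexagonal grid attaches new vertices in pairs---not by an automaton over columns---and that structural observation, together with an explicit $T$, is what your proposal lacks.
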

\begin{proof}
Consider the orientation $A_6$ of $K_6$ depicted in Figure~\ref{fig:a6} (already used in \cite{B,FRR}). We are interested in the following property of $A_6$, which has been checked by computer (and can be reasonably checked by hand as well).

\begin{property}\label{prop:a6}
For any two vertices $u$ and $v$, there exists a path between $u$ and $v$ of unoriented length 3, whatever the orientations of the arcs on the path. 
\end{property}

\begin{figure}[H]
\centerline{
\begin{tikzpicture}[scale=1.2,rotate=-30,edge_style/.style={thick,black!80}]
\tikzstyle{vertex}=[scale=0.8,circle,draw,minimum size=10pt,inner sep=2pt]
\node[vertex] (0) at ({1.732*(1+6/2)},{3*6/2}) {0};
\node[vertex] (1) at ({1.732*(2+6/2)},{3*6/2}) {1};
\node[vertex] (2) at ({1.732*(3+5/2)},{3*5/2}) {2};
\node[vertex] (3) at ({1.732*(3+4/2)},{3*4/2}) {3};
\node[vertex] (4) at ({1.732*(2+4/2)},{3*4/2}) {4};
\node[vertex] (5) at ({1.732*(1+5/2)},{3*5/2}) {5};
\foreach \s/\t in {0/1,0/2,0/3,1/2,1/5,2/3,2/4,2/5,3/4,3/1,4/5,4/0,4/1,5/0,5/3}
\path[edge_style] (\s) edge [->-,>=stealth] (\t);
\end{tikzpicture}}
\caption{Digraph $A_6$}
\label{fig:a6}
\end{figure}
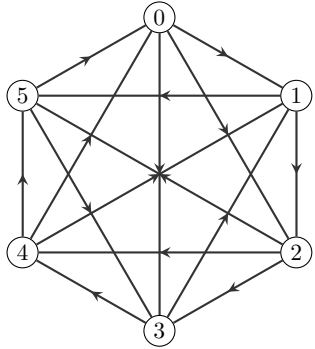

Now, the following is an algorithm that colors any given orientation $\vv{H}_{m,n}$ of $H_{m,n}$ by a homomorphism to $A_6$. Consider the following steps keeping in mind Figure~\ref{fig:hmn}.
\begin{enumerate}
\item {\bf First row.} The first row $v_{1,1}, v_{1,2}, \dots, v_{1,2n+1}$ of vertices in $\vv{H}_{m,n}$ can be clearly colored by a homomorphism to $A_6$. We start with any color for $v_{1,1}$. Then, since both the indegree and the outdegree of any vertex in $A_6$ are at least 2, a new color is always available for the succeeding vertex. 
\item {\bf Following rows.} Suppose $k-1$ rows have already been colored. In order to color the $k$-th row, we color the first vertex $v_{k,k-1}$ in the row with a color compatible with that of $v_{k-1,k-1}$, its neighbor in row $k-1$, which is already colored. Now, if $u$ and $v$ are the colors of $v_{k,k-1}$ and $v_{k-1,k+1}$, resp., according to Property~\ref{prop:a6} there is a path of unoriented length 3 from $u$ to $v$ in $A_6$ that respects the orientations of the path from $v_{k,k-1}$ to $v_{k-1,k+1}$ in $\vv{H}_{m,n}$. Therefore, the vertices $v_{k,k}$ and $v_{k,k+1}$ can be colored. Taking $v_{k,k+1}$ and $v_{k-1,k+3}$, we can continue with the coloring in the same way until we complete the $k$-th row.
\end{enumerate}

Now, any orientation $\vv{G}$ of an hexagonal grid is a subgraph of some orientation $\vv{H}_{m,n}$ of $H_{m,n}$ for some integers $m$ and $n$. By the above algorithm, $\vv{H}_{m,n}$ has a homomorphism to $A_6$ and, then, so does $\vv{G}$. Therefore, the oriented chromatic number of any hexagonal grid is at most 6.
\end{proof}

\section{Lower bound}\label{section:lower}

Our lower bound is based on the construction of a digraph that cannot be colored by any tournament of order 5. The complete list of 5-tournaments is therefore needed in order to get our result.

According to Theorem 41 in \cite{M} (based on a formula from \cite{D}), there are only 12 tournaments of order 5 which, expressed as the upper triangle of their adjacency matrices in row order (see the list of tournaments of 5 vertices at \cite{K}), are the following:
\begin{align*}
T_1:&\,\,\, {\tt 0000000000} \,\,\,\,\,\,\,\,\,& T_7:&\,\,\, {\tt 1000001000} \\
T_2:&\,\,\, {\tt 0000000101} \,\,\,\,\,\,\,\,\,& T_8:&\,\,\, {\tt 1000010000} \\
T_3:&\,\,\, {\tt 0000100010} \,\,\,\,\,\,\,\,\,& T_9:&\,\,\, {\tt 1000100000} \\
T_4:&\,\,\, {\tt 0000100100} \,\,\,\,\,\,\,\,\,& T_{10}:&\,\,\, {\tt 1000100101}\\
T_5:&\,\,\, {\tt 0001100100} \,\,\,\,\,\,\,\,\,& T_{11}:&\,\,\, {\tt 1000110101}\\
T_6:&\,\,\, {\tt 0010100101} \,\,\,\,\,\,\,\,\,& T_{12}:&\,\,\, {\tt 1100101110} 
\end{align*}
Although the method to generate the above tournaments is not published, it is the same as in \cite{K2} (B. D. McKey, personal communication, December 2019), meaning that they are pairwise nonisomorphic. For a direct check of nonisomorphism, a graph invariant as the score set is not fine enough to give different results on all twelve tournaments, but the following variation is. Define the {\em double score set} of a tournament $T$, written ${\rm ds}(T)$, as the set $\{ \sum_{(u,v) \in A(T)} {\rm deg}_{T}^+(v) \}_{u \in V(T)}$. It is easy to check that the tournaments above have pairwise distinct double score sets, that is, ${\rm ds}(T_i) \ne {\rm ds}(T_j)$ for $i \ne j$, $1 \le i, j \le 12$. Since the double score set is clearly a graph invariant, the previous list must contain all tournaments of order 5.

\smallskip
We base the construction of our digraph on two digraphs that, combined, cannot be colored with any of the twelve tournaments of order 5. In particular, digraph $D_4$ (see Figure~\ref{fig:h6}) is not colorable with tournament $T_5$ and digraph $D_{49}$ (see Figure~\ref{fig:h49}) is not colorable with any other tournament of order 5. Graphs smaller than $D_{49}$ can show non-colorability with most 5-tournaments; a directed 6-cycle, for exemple, is clearly non-colorable with $T_1$, the transitive tournament of order 5. However, $D_{49}$ was specifically designed to act as a counterexample of colorability for $T_{11}$ and, incidentally, constitutes a counterexample for all other 5-tournaments  except $T_5$, thus reducing the number of graphs that are needed for the proof. Graph $D_{49}$ was found by starting with a central directed 6-cycle and then attaching new oriented hexagons to it so that the number of possible $T_{11}$ colorings is as limited as possible. To this end, a path-checking script was used: in concrete, we found it useful that our script produced all possible oriented paths in $T_{11}$ for given source and target colors and a concrete orientation of a path of given length.

\goodbreak

\begin{theorem}
$\chi_o({\cal H}_2) \ge 6$.
\end{theorem}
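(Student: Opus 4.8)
The plan is to exhibit a single orientation of a subgraph of some hexagonal grid that admits no oriented $5$-coloring; together with the upper bound of the previous section this pins down $\chi_o({\cal H}_2)=6$. The key reduction is that an oriented $5$-coloring of a digraph $\vv{G}$ is by definition a homomorphism to an orientation of $K_5$, i.e.\ to a $5$-tournament, and moreover a homomorphism to any oriented graph on at most five vertices extends to one into a $5$-tournament (pad with isolated vertices and orient the missing edges arbitrarily; adding arcs to the target never destroys a homomorphism). Hence $\chi_o(\vv{G})\le 5$ if and only if $\vv{G}$ maps homomorphically to some $5$-tournament. Since the discussion above establishes, via the double score set invariant, that $T_1,\dots,T_{12}$ is the complete list of $5$-tournaments up to isomorphism, it suffices to produce a $\vv{G}\in{\cal H}_2$ that has no homomorphism to any of these twelve targets.

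Rather than search for one graph that simultaneously defeats all twelve tournaments, I would follow the modular strategy signalled above and combine two gadgets. Let $\vv{G}$ be an orientation of a subgraph of a hexagonal grid containing both $H_4$ (Figure~\ref{fig:h6}) and $H_{49}$ (Figure~\ref{fig:h49}) as subdigraphs; since the grids $H_{m,n}$ grow without bound, two vertex-disjoint copies of these gadgets embed in a single $H_{m,n}$ for $m,n$ large enough, so $\vv{G}$ may be taken to be their disjoint union. Any homomorphism $\phi\colon\vv{G}\to T_i$ restricts to homomorphisms of each gadget into $T_i$. Thus, granting that $H_4$ has no homomorphism to $T_5$ and that $H_{49}$ has no homomorphism to any $T_i$ with $i\neq 5$, no target $T_i$ can receive $\vv{G}$: the case $i=5$ is blocked by $H_4$, and every case $i\neq 5$ is blocked by $H_{49}$. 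This gives $\chi_o(\vv{G})\ge 6$ and hence $\chi_o({\cal H}_2)\ge 6$.

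What remains, and what I expect to be the main obstacle, is to certify the two non-colorability claims: that $H_4$ maps to no copy of $T_5$, and that $H_{49}$ maps to none of the other eleven tournaments. Each claim is finite, but verifying it amounts to ruling out every candidate homomorphism, and for the $49$-vertex gadget a naive enumeration over $5^{49}$ colorings is hopeless by hand. I would organize the check as constraint propagation with backtracking: fix an initial color for one vertex and exploit whatever automorphisms the target tournament admits to cut the branching at the root, then repeatedly force the colors of neighbours from the arc-direction constraints imposed by $T_i$, backtracking as soon as some vertex is left with no admissible color. Because every internal vertex of a hexagonal grid has degree $3$, each decision strongly constrains its neighbourhood and the search tree collapses quickly, which is precisely why this exhaustive but highly prunable check is the part delegated to a computer. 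The conceptual content of the theorem is therefore the reduction to twelve finite targets together with the two-gadget decomposition; the labour lies in the certified search, whose correctness rests only on the completeness of the tournament list already justified above.
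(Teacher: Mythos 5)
Your proposal is correct and follows essentially the same route as the paper: reduce oriented $5$-colorability to homomorphisms into the twelve $5$-tournaments (whose completeness is certified by the double score set invariant), then block $T_5$ with $H_4$ and the remaining eleven tournaments with $H_{49}$, both gadgets embedded in a single large hexagonal grid. The only difference is an implementation detail in the finite verification --- you suggest backtracking with constraint propagation, whereas the paper encodes the homomorphism constraints as an integer linear program solved with CPLEX --- which does not change the structure of the argument.
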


\begin{proof}
Let $H$ be an undirected graph having an orientation $\vv{H}$ which contains digraphs $D_4$ (in Figure~\ref{fig:h6}) and $D_{49}$ (in Figure~\ref{fig:h49}).

Graph $\vv{H}$ cannot have an oriented 5-coloring since, for each $i$ with $1 \le i \le 12$, there is no homomorphism from either $D_4$ or $D_{49}$ to $T_i$. Although this fact was initially checked by hand (using small graphs for each $i \le 10$), here we use linear programming techniques as a practical tool for checking the absence of homomorphisms.

Our linear program checks whether a given oriented graph $G$ has a homomorphism to tournament $T_i$, $1 \le i \le 12$. Our goal is to know whether there is a homomorphism from $G$ to $T_i$, that is, whether the set of constraints
\[ ({\rm phi}(u),{\rm phi}(v)) \in A(T_i) \mbox{ for all } (u,v) \in A(G) \]
has a solution, where phi$(u)$ is a decision variable for each $u \in V(G)$ and the objective function is irrelevant. An implementation of the previous linear program can be found in Section~\ref{sec:opl}. Using OPL, no solution was found when $G = D_4$ and $i = 5$ and, consequently, we conclude that $D_4$ is not colorable with tournament $T_5$. Similarly, choosing $G = D_{49}$ and any $i \ne 5$, $1 \le i \le 12$, no solution exists neither, meaning that $D_{49}$ is not colorable by any tournament of order 5 distinct from $T_5$. Since $\vv{H}$ contains both $D_4$ and $D_{49}$ as subgraphs, and homomorphisms are preserved for subgraphs, $\vv{H}$ cannot have a homomorphism to any tournament of order 5 and the theorem follows.
\end{proof}

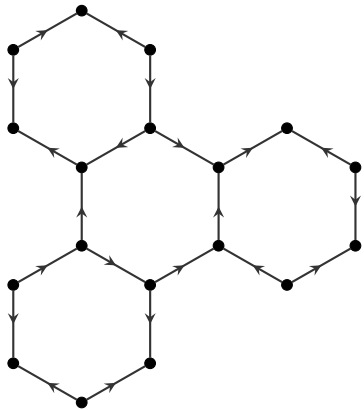
\begin{figure}[H]
\centerline{
\begin{tikzpicture}[scale=0.6,rotate=-30,edge_style/.style={thick,black!80}]
\tikzstyle{vertex}=[scale=0.5,circle,draw,minimum size=18pt,inner sep=3pt]
\foreach \x/\y in {3/4} \node[vertex] (\x\y) at ({1.732*(\x+\y/2)},{3*\y/2}) {6};
\foreach \x/\y in {4/5} \node[vertex] (\x\y) at ({1.732*(\x+\y/2)},{3*\y/2}) {2};
\foreach \x/\y in {3/5} \node[vertex] (\x\y) at ({1.732*(\x+\y/2)},{3*\y/2}) {1};
\foreach \x/\y in {5/4} \node[vertex] (\x\y) at ({1.732*(\x+\y/2)},{3*\y/2}) {3};
\foreach \x/\y in {5/3} \node[vertex] (\x\y) at ({1.732*(\x+\y/2)},{3*\y/2}) {4};
\foreach \x/\y in {4/3} \node[vertex] (\x\y) at ({1.732*(\x+\y/2)},{3*\y/2}) {5};
\foreach \x/\y in {2/4} \node[vertex] (\x\y) at ({1.732*(\x+\y/2)},{3*\y/2}) {7};
\foreach \x/\y in {1/5} \node[vertex] (\x\y) at ({1.732*(\x+\y/2)},{3*\y/2}) {8};
\foreach \x/\y in {1/6} \node[vertex] (\x\y) at ({1.732*(\x+\y/2)},{3*\y/2}) {9};
\foreach \x/\y in {2/6} \node[vertex] (\x\y) at ({1.732*(\x+\y/2)},{3*\y/2}) {\footnotesize 10};
\foreach \x/\y in {4/6} \node[vertex] (\x\y) at ({1.732*(\x+\y/2)},{3*\y/2}) {\footnotesize 11};
\foreach \x/\y in {5/6} \node[vertex] (\x\y) at ({1.732*(\x+\y/2)},{3*\y/2}) {\footnotesize 12};
\foreach \x/\y in {6/5} \node[vertex] (\x\y) at ({1.732*(\x+\y/2)},{3*\y/2}) {\footnotesize 13};
\foreach \x/\y in {6/4} \node[vertex] (\x\y) at ({1.732*(\x+\y/2)},{3*\y/2}) {\footnotesize 14};
\foreach \x/\y in {6/2} \node[vertex] (\x\y) at ({1.732*(\x+\y/2)},{3*\y/2}) {\footnotesize 15};
\foreach \x/\y in {6/1} \node[vertex] (\x\y) at ({1.732*(\x+\y/2)},{3*\y/2}) {\footnotesize 16};
\foreach \x/\y in {5/1} \node[vertex] (\x\y) at ({1.732*(\x+\y/2)},{3*\y/2}) {\footnotesize 17};
\foreach \x/\y in {4/2} \node[vertex] (\x\y) at ({1.732*(\x+\y/2)},{3*\y/2}) {\footnotesize 18};
\foreach \s/\t in {26/35,26/16,15/16,15/24,34/24,35/34,43/34,43/53,53/54,54/45,35/45,64/54,64/65,56/65,56/46,45/46,53/62,61/62,61/51,42/51,42/43}
\path[edge_style] (\s) edge [->-,>=stealth] (\t);
\end{tikzpicture}}
\caption{Digraph $D_4$}
\label{fig:h6}
\end{figure}

\begin{figure}[H]
\centerline{
\begin{tikzpicture}[scale=0.6,rotate=-30,edge_style/.style={thick,black!80}]
\tikzstyle{vertex}=[scale=0.5,circle,draw,minimum size=8pt,inner sep=2pt,fill=black]
\foreach \x/\y in {0/13,0/14,1/8,1/9,1/11,1/12,1/14,1/15,
2/6,2/7,2/9,2/10,2/12,2/13,2/15,
3/5,3/7,3/8,3/10,3/11,3/13,3/14,
4/5,4/6,4/8,4/9,4/11,4/12,4/14,4/15,
5/4,5/6,5/7,5/9,5/10,5/12,5/13,5/15,5/16,
6/4,6/5,6/7,6/8,6/10,6/11,6/13,6/14,6/16,
7/2,7/3,7/5,7/6,7/8,7/9,7/11,7/12,7/14,7/15,
8/1,8/3,8/4,8/6,8/7,8/9,8/10,8/12,8/13,8/15,
9/1,9/2,9/4,9/5,9/7,9/8,9/10,9/11,9/13,9/14,
10/0,10/2,10/3,10/5,10/6,10/8,10/9,10/11,10/12,
11/0,11/1,11/3,11/4,11/6,11/7,11/9,11/10,11/12,
12/1,12/2,12/4,12/5,12/7,12/8,12/10,12/11,
13/2,13/3,13/5,13/6,13/8,13/9,13/11,14/1,14/3,14/4,14/6,14/7,14/9,14/10,
15/1,15/2,15/4,15/5,15/7,15/8,16/2,16/3}
    \node[vertex] (\x-\y) at ({1.732*(\x+\y/2)},{3*\y/2}) {};
\foreach \s/\t in {
8-7/7-8,7-8/7-9,7-9/8-9,8-9/9-8,9-8/9-7,9-7/8-7,
7-9/6-10,8-9/8-10,9-8/10-8,9-7/10-6,8-7/8-6,7-8/6-8,
6-8/5-9,5-9/5-10,5-10/6-10,6-10/6-11,6-11/7-11,7-11/8-10,
8-10/9-10,9-10/10-9,10-9/10-8,10-8/11-7,11-7/11-6,11-6/10-6,
10-6/10-5,10-5/9-5,9-5/8-6,8-6/7-6,7-6/6-7,6-7/6-8,
4-11/4-12,4-12/5-12,5-12/5-13,5-13/6-13,
7-12/6-13,7-12/8-12,8-12/9-11,9-11/10-11,
10-11/11-10,11-9/11-10,11-9/12-8,12-8/12-7,
12-7/13-6,13-6/13-5,12-5/13-5,12-5/12-4,12-4/11-4,
11-4/11-3,11-3/10-3,9-4/10-3,9-4/8-4,8-4/7-5,7-5/6-5,
6-5/5-6,5-7/5-6,5-7/4-8,4-8/4-9,4-9/3-10,3-10/3-11,4-11/3-11,
6-7/5-7,4-9/5-9,5-10/4-11,5-12/6-11,7-11/7-12,9-11/9-10,
10-9/11-9,12-7/11-7,11-6/12-5,11-4/10-5,9-5/9-4,7-5/7-6,
2-10/3-10,3-11/2-12,2-12/1-12,1-11/1-12,2-10/1-11,
2-12/2-13,2-13/3-13,4-12/3-13,3-13/3-14,3-14/4-14,4-14/5-13,
4-14/4-15,4-15/5-15,6-14/5-15,6-13/6-14,
6-14/7-14,7-14/8-13,8-12/8-13,8-13/9-13,9-13/10-12,10-12/10-11,
10-12/11-12,11-12/12-11,12-10/12-11,11-10/12-10,
12-10/13-9,13-9/13-8,12-8/13-8,13-8/14-7,14-7/14-6,14-6/13-6,
14-6/15-5,15-5/15-4,14-4/15-4,13-5/14-4,
14-4/14-3,14-3/13-3,12-4/13-3,13-3/13-2,13-2/12-2,12-2/11-3,
12-2/12-1,12-1/11-1,10-2/11-1,10-3/10-2,
10-2/9-2,9-2/8-3,8-4/8-3,8-3/7-3,7-3/6-4,6-4/6-5,
6-4/5-4,5-4/4-5,4-6/4-5,5-6/4-6,4-6/3-7,3-7/3-8,4-8/3-8,
3-8/2-9,2-9/2-10,
1-12/0-13,0-14/0-13,1-14/0-14,2-13/1-14,
1-14/1-15,1-15/2-15,2-15/3-14,
5-15/5-16,6-16/5-16,7-15/6-16,7-14/7-15,
7-15/8-15,8-15/9-14,9-14/9-13,
12-11/13-11,14-10/13-11,14-9/14-10,13-9/14-9,
14-9/15-8,15-8/15-7,15-7/14-7,
15-4/16-3,16-2/16-3,15-2/16-2,14-3/15-2,
15-2/15-1,15-1/14-1,14-1/13-2,
11-1/11-0,10-0/11-0,9-1/10-0,9-2/9-1,
9-1/8-1,8-1/7-2,7-2/7-3,
4-5/3-5,2-6/3-5,2-7/2-6,3-7/2-7,
2-7/1-8,1-8/1-9,1-9/2-9}
\path[edge_style] (\s) edge [->-,>=stealth] (\t);
\end{tikzpicture}}
\caption{Digraph $D_{49}$}
\label{fig:h49}
\end{figure}
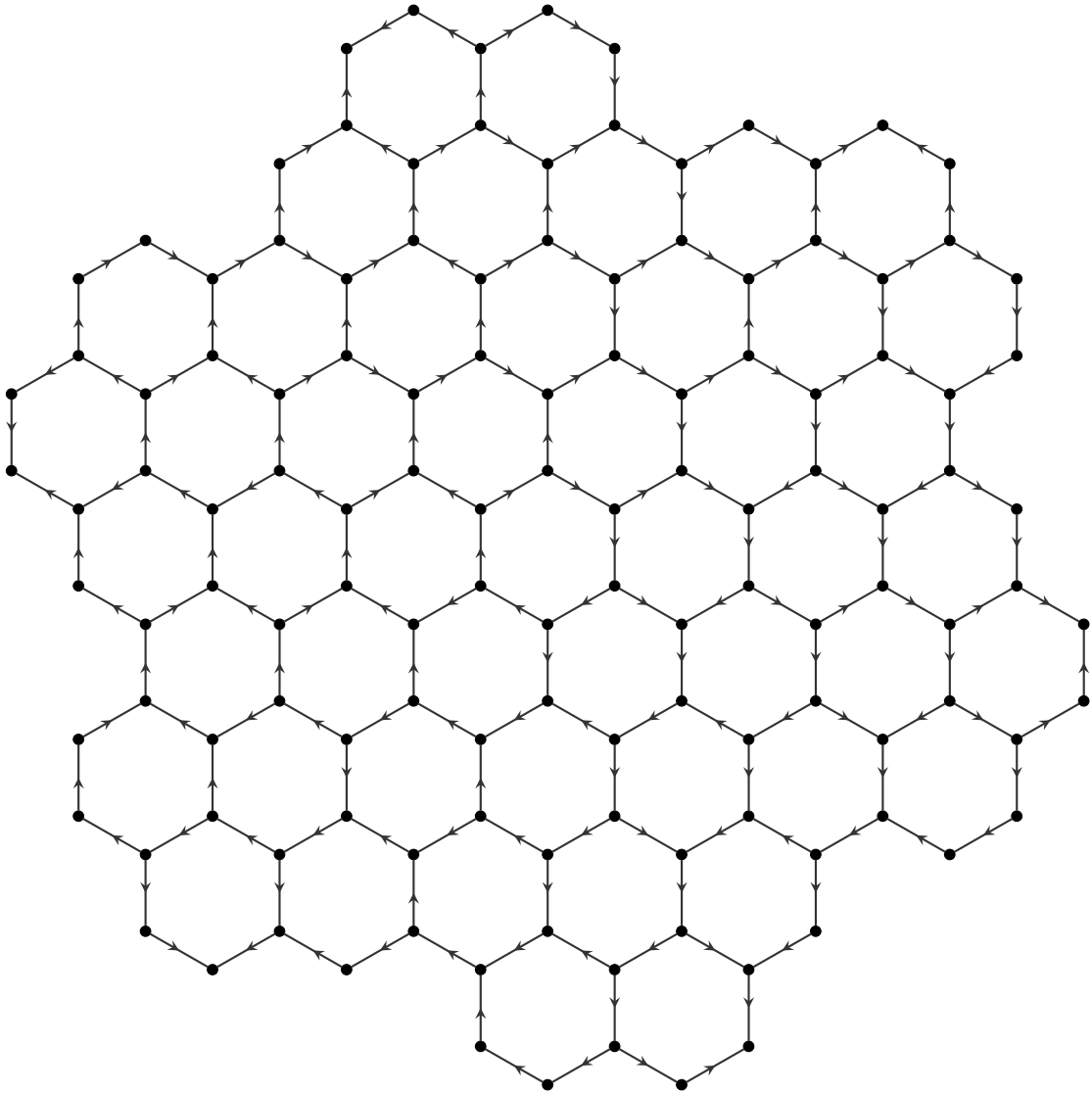

\section{Linear program}
\label{sec:opl}

We code our linear program in OPL and use IBM ILOG CPLEX Optimization Studio to run it. Remember that our goal is to decide whether an oriented graph $G$ has a homomorphism to a 5-tournament $T_i$, $1 \le i \le 12$. The data needed for our model is, then, $G$ and $T$, which in our implementation will be described by the input variables {\tt N} $= |V(G)|$, {\tt M} $= |A(G)|$, {\tt arc} $= A(G)$, and {\tt T} $= T_i$. Assuming that $V(T_i) \subseteq \{0,\dots,4\}$, we code every arc $(u,v)$ of $T_i$ as the positive integer $u+5v$, which clearly identifies the arc. For instance, using this encoding, arc $(0,4)$ of $T_5$ is encoded by number 20 and the whole tournament $T_5$ by {\tt T = [1 2 3 8 9 11 14 17 19 20]}. The model file is the following.

{\small
\begin{verbatim}
  int N = ...;
  int M = ...;
  range I = 1..10;
  range Vertices = 1..N;
  range Arcs = 1..M;
 
  int T[i in I] = ...;
  int arc[Arcs][1..2] = ...;
 
  dvar int phi[j in Vertices] in 0..4;
  dvar boolean z[i in I,a in Arcs];

  minimize 1;
  
  subject to { 
    // the coloring is correct
    forall (a in Arcs) 
        phi[arc[a][1]] + 5*phi[arc[a][2]] == sum(i in I) z[i,a]*T[i];
	
    // there is only one i for which z[i,a] equals 1 for each arc a
    forall (a in Arcs) sum(i in I) z[i,a] == 1; 	
  }
\end{verbatim}}

In our linear program, then, we check whether ({\tt phi[arc[a][1]]}, {\tt phi[arc[a][2]]}) is an arc in {\tt T} using the equation

\begin{verbatim}       
  phi[arc[a][1]] + 5*phi[arc[a][2]] == sum(i in I) z[i,a]*T[i];
\end{verbatim}

\noindent 
Here we introduce new decision variables {\tt z[i,a]} to guarantee that each of the numbers obtained on the left of the equation (corresponding to arc {\tt a}) equals one of the positive integers in the list {\tt T}.

\medskip
As an example, the data file to check that $D_4$ cannot be colored with $T_5$ is the following (vertex numbers in {\tt arc} follow the numbering in Figure~\ref{fig:h6}).

{\small
\begin{verbatim}
  N = 18;
  M = 21;
  T = [1 2 3 8 9 11 14 17 19 20];
  arc = [[1 2] [3 2] [4 3] [5 4] [5 6] [1 6] [6 7] [8 7] [8 9] [10 9] [10 1] 
    [2 11] [12 11] [12 13] [14 13] [14 3] [4 15] [16 15] [16 17] [18 17] [18 5]];
\end{verbatim}}

\section{Acknowledgments}
The authors would like to thank Janusz Dybizba\'nski for patiently answering our first doubts about the topic and Brendan McKay for his useful suggestions. This research was supported by MICIN under project PID2019-104129GB-I00 and by Universitat Polit\`ecnica de Catalunya under grant AGRUPS-2023.

\section{Declaration of competing interest}

The authors declare that they have no known competing financial interests or personal relationships that could have appeared to influence this work.

\goodbreak

\end{document}